\documentclass[11pt,a4paper, reqno]{amsart}
\usepackage[colorlinks,citecolor=blue]{hyperref}
\usepackage{amsmath}
\usepackage{amsthm}
\usepackage{amssymb}
\usepackage{enumerate}
\usepackage{color}
\usepackage{graphicx}
\usepackage{amsfonts}
\usepackage[left=3.8cm,right=3.8cm,bottom=3.8cm,top=4cm]{geometry}

\DeclareGraphicsExtensions{.eps,.pdf}

\newcommand{\R}{{\mathbb R}}

\newcommand{\N}{{\mathbb N}}

\newcommand{\ka}{{\kappa}}

\newtheorem{thm}{Theorem}

\newtheorem{lemma}{Lemma}
\newtheorem{pro}{Proposition}

\theoremstyle{definition}
\newtheorem{definition}{Definition}
\newtheorem{example}{Example}
\newtheorem{remark}{Remark}

\newcommand{\Hmm}[1]{\leavevmode{\marginpar{\tiny%
$\hbox to 0mm{\hspace*{-0.5mm}$\leftarrow$\hss}%
\vcenter{\vrule depth 0.1mm height 0.1mm width \the\marginparwidth}%
\hbox to 0mm{\hss$\rightarrow$\hspace*{-0.5mm}}$\\\relax\raggedright
#1}}}

\begin{document}

\title[Symmetry of the spectrum of signed graphs]{On the symmetry of the Laplacian spectra of signed graphs}
\author{Fatihcan M. Atay}
\address{Max Planck Institute for Mathematics in the Sciences, Leipzig 04103, Germany}
\email{fatay@mis.mpg.de}
\author{Bobo Hua}
\address{School of Mathematical Sciences, LMNS, Fudan University, Shanghai 200433, China; Shanghai Center for Mathematical Sciences, Fudan University, Shanghai 200433, China}
\email{bobohua@fudan.edu.cn}

\begin{abstract}
We study the symmetry properties of the spectra of normalized Laplacians on signed graphs. We find a new machinery that generates symmetric spectra for signed graphs, which includes bipartiteness of unsigned graphs as a special case. Moreover, we prove a fundamental connection between the symmetry of the spectrum and the existence of damped two-periodic solutions for the discrete-time heat equation on the graph. \medskip \\
\textbf{MSC2010:} 05C50, 05C22, 39A12
\end{abstract}
\maketitle

\section{Introduction}


A signed graph refers to a graph together with a labeling of edges by a sign $\pm 1$, so as to represent two different types of relationships between adjacent vertices. Such graphs originate from studies of social networks where one distinguishes between friend and foe type relations \cite{Harary53}. They are also appropriate models of electrical circuits with negative resistance or neuronal networks with inhibitory or excitatory connections.

Let $G(V,E,\mu)$ be a finite, undirected, weighted graph without isolated vertices, having the vertex set $V$, edge set $E$, and edge weight $\mu:E\to \mathbb{R}_+$.
Two vertices $x,y$ are called neighbours, denoted $x\sim y$, if $(x,y)\in E$.
The weighted degree measure
$m: V\to (0,\infty)$ on the vertices is defined by $m(x)=\sum_{y:y\sim x}\mu_{xy}$.
It reduces to the combinatorial degree for the unweighted case, i.e. when $\mu=1_E,$ the constant function $1$ on $E.$
A function $\eta: E \to \{1,-1\}$  is called a sign function on $E.$ We call $\Gamma=(G,\eta)$ a \emph{signed graph}
based on $G$ with the sign function $\eta.$
Let $\ell^2(V,m)$ denote the space of real functions on $V$ equipped with the $\ell^2$ norm with respect to the measure $m.$ The \emph{normalized signed Laplacian}  $\Delta: \ell^2(V,m) \to \ell^2(V,m)$ for a signed graph $\Gamma$ is defined by
\begin{equation}
	\Delta f(x)=\frac{1}{m(x)}\sum_{y\in V:y\sim x}\mu_{xy}(f(x)-\eta_{xy}f(y)), \quad \forall f\in \ell^2(V,m).
\end{equation}
One can show that $\Delta$ is a self-adjoint linear operator on $\ell^2(V,m).$ The eigenvalues of $\Delta$ will be referred to as the spectrum of the signed graph $\Gamma$ and denoted by $\sigma(\Gamma)$.
The spectrum is always contained in the interval $[0,2]$.
The spectral properties of signed graphs have been studied by several authors, e.g.~\cite{HouLi03,Hou05,LiLi08,LiHonghaiLi09,GerminaHameedZaslavsky11,AtayTuncel,Belardo14}.


We say that the spectrum of a signed graph $\Gamma$ is \emph{symmetric} (w.r.t.
the point $\{1\}$) if $$\sigma(\Gamma)=2-\sigma(\Gamma),$$
that is, $\lambda$ is an eigenvalue of the signed Laplacian $\Delta$ of $\Gamma$ if and only if $2-\lambda$ is.
For unsigned graphs, i.e.~for $\eta=1_E$, it is well known  that  the spectrum is symmetric if and only if the graph is bipartite \cite{Chung97}.
For signed graphs, however, the symmetry of the spectrum and  bipartiteness are not equivalent: While bipartite signed graphs do have symmetric spectra (see Lemma \ref{l:bipartite spectrum}), we will see that the reverse implication does not hold,
and there exist non-bipartite signed graphs which nevertheless have symmetric spectra.
In fact,
we will present a general machinery to produce symmetric spectra for signed graphs that has no counterpart in the unsigned case. This can be seen as one of the major differences between spectral theories of unsigned and signed graphs.

To state our result, we use the concept of a \emph{switching function}, i.e., a function $\theta: V\to \{1,-1\}$:
Given $\Gamma=(G,\eta)$, a switching function $\theta$ can be used to define a new
graph $\Gamma^{\theta}=(G,\eta^\theta)$ having the sign function $\eta^{\theta}(xy)=\theta(x)\eta_{xy}\theta(y)$.
For a signed graph $\Gamma=(G,\eta),$ we denote by $-\Gamma=(G,-\eta)$ the signed graph with the opposite sign function.
We say that two signed weighted graphs
$\Gamma=(V,E,\mu,\eta)$ and $\Gamma'=(V',E',\mu',\eta')$ are \emph{isomorphic}, denoted
$\Gamma\simeq\Gamma'$, if there is a bijective map  $S:V\to V'$ such that $Sx\sim Sy$ iff $x\sim y,$ and $\mu_{SxSy}'=\mu_{xy}$ and $\eta_{SxSy}'=\eta_{xy}$ for any $x\sim y$.
That is, two signed graphs are isomorphic if they only differ up to a relabeling of vertices.
Based on these notions we prove the following in Theorem \ref{t:symmetry of the spectrum}:

\emph{Given $\Gamma=(G,\eta)$, if there is a
switching function $\theta: V\to \{1,-1\}$ such that
$\Gamma^{\theta}\simeq -\Gamma$, then the spectrum of $\Gamma$ is symmetric.}



Motivated by this new machinery, we provide a nontrivial example (Example \ref{ex:symmetric spectrum}) of a signed graph that possesses a symmetric spectrum but is not bipartite.

Although the symmetry of the spectrum of a signed graph is a purely algebraic property, we find that it plays an important role in the analysis and dynamics on the graph. We say that a function $f:(\N\cup\{0\})\times V\to \R$ solves the
discrete-time heat equation on $(G,\eta)$ with the initial data
$g:V\to \R$, if for any $n\in \mathbb{N}\cup\{0\}$ and $x\in V$,
\begin{equation}\label{e:discrete time heat}
\left\{\begin{array}{ll}f(n+1,x)-f(n,x)=-\Delta f(n,x), & \\
f(0,x)=g(x). &
\end{array}
\right.
\end{equation}
This definition mimics continuous-time heat equations on Euclidean domains or Riemannian manifolds.
Among all solutions to the discrete-time heat equation, a special class, namely damped $2$-periodic
solutions (see Definition \ref{d:periodic solution}),
will be of particular interest. These solutions are motivated from the well-known oscillatory solutions on unsigned graphs:
For a bipartite unsigned graph $G(V,E,\mu)$ with bipartition $V=V_1\cup V_2,$ the solution to the discrete-time heat equation with initial data $f(0,\cdot)=1_{V_1}$ (i.e. the characteristic function on $V_1$) is given by (see e.g.
\cite[pp. 43]{Grigoryan09})
\begin{equation} \label{e:periodic_solution}
f(n,\cdot)=\left\{\begin{array}{ll}1_{V_1}, & n \text{ even}, \\
1_{V_2}, & n  \text{ odd}.
\end{array}
\right.
\end{equation}
That is, the solution oscillates between two phases, $1_{V_1}$ and $1_{V_2},$ which justifies the name of $2$-periodic solution. We generalize such solutions to signed graphs and also allow temporal damping with decay rate $\lambda\in[0,1]$. (In the example of \eqref{e:periodic_solution}, $\lambda=1$.)
%
We show that this analytic property of the solutions, i.e. the periodicity of order two, is deeply connected to the symmetry of the spectrum of the signed graph.
Precisely, we prove the following in Theorem \ref{t:symmetry and periodic}:

\textit{
Let $(G,\eta)$ be a signed  graph. Then $u$ is a damped $2$-periodic solution with decay rate $\lambda$ ($\lambda\neq0$) if and only if
 $$u=f+g,$$
where $f$ and $g$ are eigenfunctions corresponding to the eigenvalues $1-\lambda$ and $1+\lambda$, respectively, of the normalized Laplacian.}


In the last section, we study the spectral properties of the motif
replication of a signed graph. A subset of vertices, say $\Omega$,
of a signed graph $\Gamma=(G,\eta)$ is sometimes referred to as a motif. By motif
replication we refer to the enlarged graph $\Gamma^{\Omega}$ that
contains a replica of the subset $\Omega$ with all its
connections and weights; see \cite{AtayTuncel} or Section~\ref{s:motif}. Let $\Delta_{\Omega}$ be the signed Laplacian on
$\Omega$ with Dirichlet boundary condition whose spectrum is denoted by $\sigma(\Delta_{\Omega})$; see \cite{BHJ12} for the unsigned case or
Section~\ref{s:motif}.
We prove in Theorem \ref{t:motif replication} that
$\sigma(\Delta_{\Omega})\subset \sigma(\Gamma^{\Omega}).$
This follows from a
discussion with Bauer-Keller \cite{BauerKeller12} which obviously
generalizes \cite[Theorem~13]{AtayTuncel}.
As a consequence, if the subgraph $\Omega$ admits damped 2-periodic solutions, then so does the larger graph $\Gamma^\Omega$ after replication.



The paper is organized as follows. In the next section we introduce
the concepts of signed graphs and normalized signed Laplacians, and
study their spectral properties. In Section~\ref{s:symmetry}, we explore a general machinery to create symmetry in the spectrum.
Section~\ref{s:Periodic solutions} is devoted to
damped $2$-periodic solutions of the discrete-time heat equation and their connection to the
symmetry of the spectrum. The last
section contains the spectral properties of motif replication.


\section{Basic properties of signed graphs}\label{s:basic properties signed graphs}
In this section, we study the basic properties of signed graphs and
the spectral properties of the normalized signed Laplacian.
Let $G(V,E)$ be a finite (combinatorial) graph with the set of vertices $V$ and the set of edges $E$ where $E$ is a symmetric subset of $V\times V$.
A graph is called connected if for any $x,y\in V$ there is a finite sequence of vertices, $\{x_i\}_{i=0}^n,$ such that $$x=x_0\sim x_1\sim \cdots\sim x_n=y.$$ In this paper, we consider finite, connected, undirected graphs without isolated vertices.

We assign symmetric weights on edges, $$\mu:E\to (0,\infty),\quad E\ni (x,y)\mapsto \mu_{xy}$$ which satisfies $\mu_{xy}=\mu_{yx}$ for any $x\sim y,$  and call the triple $G(V,E,\mu)$ a \emph{weighted graph}.
The special case of $\mu=1_E$ is also referred to as an unweighted graph.  For any $x\in V$, the weighted degree of $x$ is defined as  $$m(x)=\sum_{y\sim x} \mu_{xy}.$$ The weighted degree function $m: V\to (0,\infty),$ $x\mapsto m(x)$ can be
understood as a measure on $V.$ We denote by $\ell^2(V,m)$ the space of real functions
on $V$ equipped with an inner product with respect to the measure $m$, defined by $\langle u,v\rangle=\sum_{x\in
V}u(x)v(x)m(x)$ for $u,v\in \ell^2(V,m).$


\begin{definition}[Weighted signed graphs]\label{d:signed graphs} Let $G(V,E,\mu)$ be a
weighted graph. A symmetric function $\eta: E\to \{1,-1\},$
$E\ni(x,y)\mapsto\eta_{xy}$, is called a \emph{sign function} on $G$.
We refer to the quadruple $(V,E,\mu,\eta)=(G,\eta)$ as a \emph{weighted signed
graph}.
\end{definition}

In the special case $\eta=1_E$, $(G,\eta)$ is called an unsigned graph. In the following, by signed graphs we always
mean weighted signed graphs. For convenience, we define the \emph{signed weight} of a
signed graph $(V,E,\mu,\eta)$ by $\ka=\eta \mu: E\to \R,$ i.e. $\ka_{xy}=\eta_{xy}\mu_{xy}$ for any $x\sim y.$


%

Let $\Gamma=(V,E,\mu,\eta)$ be a signed graph. The \emph{normalized signed Laplacian} of $\Gamma$, denoted by
$\Delta_{\Gamma},$ is defined as
$$\Delta_{\Gamma} f(x)=\frac{1}{m(x)}\sum_{y\sim x}\mu_{xy}(f(x)-\eta_{xy}f(y)),\ \ \ \forall\ f:V\to\R.$$


The adjacency matrix of a signed graph $\Gamma$ is defined
by \[A_{\Gamma}(x,y)=\left\{\begin{array}{lr} \ka_{xy}=\eta_{xy}\mu_{xy},& \mathrm{if}\ x\sim y \\
0,&\mathrm{otherwise}\end{array}\right.\] The degree matrix is defined as
$D_{\Gamma}(x,y):=m(x)\delta_{xy}$, where $\delta_{xy}=1$ if $y=x$, and $0$ otherwise.
Hence, the normalized Laplacian
of the signed graph $\Gamma=(G,\eta)$ can be expressed as the matrix
$$\Delta_{\Gamma}=I-D_{\Gamma}^{-1}A_{\Gamma},$$
or as an operator on  $\ell^2(V,m)$,
$$\Delta_{\Gamma} f(x)=f(x)-\frac{1}{m(x)}\sum_{y\sim x}f(y)\ka_{xy}.$$
for any $f:V\to\R$.
We call $P_{\Gamma}=D_{\Gamma}^{-1}A_{\Gamma}$ the \emph{generalized transition matrix}
in analogy to the transition matrix of the simple random walk on an
unsigned graph. One notices that the row sum of $P_{\Gamma}$ is not necessarily equal to $1$ for a generic signed graph, which is an obvious difference between signed and unsigned graphs. We will often omit the subscripts
and simply write $\Delta$ and $P$ for
$\Delta_{\Gamma}$ and $P_\Gamma$, respectively unless we want to emphasize the underlying signed graph $\Gamma$.

One can show that
$\Delta:\ell^2(V,m)\to
\ell^2(V,m)$ is a bounded self-adjoint linear operator on a finite dimensional Hilbert space, and hence the spectrum is real and discrete. We denote by $\sigma(T)$ the spectrum of a linear operator $T$ on
$\ell^2(V,m)$. By the spectrum of a signed graph $\Gamma$, denoted by $\sigma(\Gamma),$ we mean the spectrum of the normalized signed Laplacian of $\Gamma,$ $\sigma(\Delta_{\Gamma}).$
Since $\Delta=I-P,$
we have $\sigma(\Delta)=1-\sigma(P)$, where the right
hand side is understood as $\{1-\lambda:\lambda\in
\sigma(P)\}.$ By the Cauchy-Schwarz inequality, one can show that the
operator norm of $P$ on $\ell^2(V,m)$ is bounded by 1.
Hence, $\sigma(P)\subset [-1,1]$, and consequently $\sigma(\Gamma)\subset [0,2]$ for any
signed graph $\Gamma.$

Given a weighted graph $G(V,E,\mu)$, we let
$$\mathcal{G}=\{(G,\eta)\, | \, \eta \ {\rm is\ a\ sign\ function}\}$$
denote the set of all signed graphs with a common underlying weighted graph
$G$.
We distinguish two special cases where the edges have all positive or all negative signs, namely $(G,+):=(G,1_E)$ and $(G,-):=(G,-1_E)$, respectively.

Given a signed graph $\Gamma=(G,\eta)$, a function $\theta: V\to \{1,-1\}$ is called a \emph{switching function} (on $V$). Using the switching function $\theta,$ one can define a new signed
graph as \begin{equation}\label{d:switched}\Gamma^{\theta}=(G,\eta^\theta),\quad\quad \mathrm{where}\quad
\eta^{\theta}(xy)=\theta(x)\eta_{xy}\theta(y),\quad x\sim y.\end{equation}
For a switching function $\theta,$ let $S^{\theta}$
denote the diagonal matrix defined as
$S^{\theta}(x,y):=\theta(x)\delta_{xy}$.
Then the adjacency
matrix of $\Gamma^{\theta}$ can be written as
$$A_{\Gamma^{\theta}}=S^{\theta}A_{\Gamma} S^{\theta},$$
Note that the degree matrix
is invariant under the switching operation, i.e.
$D_{\Gamma^{\theta}}=D_{\Gamma},$ and
$(S^{\theta})^{-1}=S^{\theta}$.
Clearly, $S^{\theta}D=DS^{\theta}$,
and hence
$\Delta_{\Gamma^{\theta}}=(S^{\theta})^{-1}\Delta_{\Gamma}
S^{\theta}$, which implies that the spectrum is invariant under the
switching operation $\theta.$ This observation yields the following
lemma.
\begin{lemma}\label{l:invariant by switching}
Let $(G,\eta)$ be a signed graph and $\theta:V\to\{1,-1\}$ a
switching function. Then the switched signed graph $\Gamma^{\theta}$ defined in \eqref{d:switched}
 has the same spectrum as $\Gamma$, i.e.
$$\sigma(\Gamma^{\theta})=\sigma(\Gamma).$$
Moreover, if $f:V\to\R$ is an eigenfunction of $\Delta_\Gamma$ corresponding to the eigenvalue
$\lambda$, then the function $f^{\theta}:V\to\R$, defined by
$f^{\theta}(x)=\theta(x)f(x)$ for
$x\in V$, is an
eigenfunction of $\Delta_{\Gamma^{\theta}}$ corresponding to the eigenvalue $\lambda$.
\end{lemma}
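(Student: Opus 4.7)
The plan is to leverage the matrix identity $\Delta_{\Gamma^\theta} = (S^\theta)^{-1}\Delta_\Gamma S^\theta$ that is essentially already derived in the paragraph preceding the lemma; once this similarity is pinned down, both assertions fall out immediately. The key structural fact is that $\theta(x) \in \{\pm 1\}$ forces $(S^\theta)^2 = I$, so $S^\theta$ is its own inverse, and conjugation by an invertible operator preserves the spectrum.

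Concretely, I would first verify $A_{\Gamma^\theta} = S^\theta A_\Gamma S^\theta$ by comparing matrix entries: for $x \sim y$, the $(x,y)$-entry of the right-hand side is $\theta(x)\eta_{xy}\mu_{xy}\theta(y) = \eta^\theta_{xy}\mu_{xy}$, which matches $A_{\Gamma^\theta}(x,y)$ by definition of $\eta^\theta$. Combined with $D_{\Gamma^\theta} = D_\Gamma$ and the observation that $S^\theta$, being diagonal, commutes with $D_\Gamma^{-1}$, a one-line calculation gives
\[ \Delta_{\Gamma^\theta} = I - D_\Gamma^{-1} S^\theta A_\Gamma S^\theta = S^\theta\bigl(I - D_\Gamma^{-1} A_\Gamma\bigr)S^\theta = S^\theta \Delta_\Gamma S^\theta. \]
Spectral invariance $\sigma(\Gamma^\theta) = \sigma(\Gamma)$ then follows from the standard fact that similar operators share spectra.

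For the eigenfunction claim, given $\Delta_\Gamma f = \lambda f$, I would set $f^\theta := S^\theta f$, so that $f^\theta(x) = \theta(x) f(x)$ as in the statement, and compute
\[ \Delta_{\Gamma^\theta} f^\theta = (S^\theta \Delta_\Gamma S^\theta)(S^\theta f) = S^\theta \Delta_\Gamma f = \lambda\, S^\theta f = \lambda f^\theta. \]
There is no genuine obstacle here; the lemma is essentially bookkeeping that repackages the similarity computation already present in the excerpt. The only small point to note in passing is that $\Delta_{\Gamma^\theta}$ is self-adjoint on the \emph{same} Hilbert space $\ell^2(V,m)$ because the measure $m$ is determined by $D_\Gamma = D_{\Gamma^\theta}$ and is unchanged by switching, so talking about $\sigma(\Gamma^\theta)$ as a set of real eigenvalues in $[0,2]$ is unproblematic.
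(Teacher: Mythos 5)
Your proposal is correct and follows essentially the same route as the paper, which establishes $A_{\Gamma^\theta}=S^\theta A_\Gamma S^\theta$, $D_{\Gamma^\theta}=D_\Gamma$, and $(S^\theta)^{-1}=S^\theta$ in the paragraph preceding the lemma and deduces the similarity $\Delta_{\Gamma^\theta}=S^\theta\Delta_\Gamma S^\theta$, from which both the spectral invariance and the eigenfunction correspondence follow. Your write-up merely fills in the entrywise verification and the conjugation computation explicitly, which the paper leaves implicit.
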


Given a weighted graph $G(V,E,\mu)$ with a fixed labeling of
vertices, we introduce an equivalence relation on the set of all
signed graphs $\mathcal{G}$ based on $G$:
Two signed graphs
$\Gamma_1,\Gamma_2\in \mathcal{G}$ are called equivalent, denoted
$\Gamma_1\sim\Gamma_2,$ if there exists a switching function
$\theta: V\to \{1,-1\}$ such that $\Gamma_1^{\theta}=\Gamma_2$.
For $\Gamma\in \mathcal{G}$, we denote by $\overline{\Gamma}$ the
equivalence class of $\Gamma$ and by
$\overline{\mathcal{G}}:=\{\overline{\Gamma}:\Gamma\in\mathcal{G}\}$
the set of all equivalence classes.

For a finite signed graph $\Gamma=(G,\eta)$, we order the eigenvalues of the
normalized Laplacian in a nondecreasing way:
$$0\leq \lambda_1\leq \lambda_2\leq\cdots\leq \lambda_N\leq 2,$$
where $N=|V|.$ We always denote the smallest and largest eigenvalues by
$\lambda_1(\Gamma)$ and $\lambda_N(\Gamma)$, respectively.
The following result is well known for
unsigned graphs (e.g.~\cite{Chung97}).

\begin{lemma}\label{l:spectrum unsigned}
Let $G(V,E,\mu)$ be a finite connected unsigned graph. Then the following
are equivalent:
\begin{enumerate}[(i)]
	\item $G$ is bipartite.
	\item $\lambda_N(G)=2.$
	\item The spectrum of $G$ is symmetric with respect to 1, i.e.
	$\sigma(G)=2-\sigma(G)$.
\end{enumerate}
\end{lemma}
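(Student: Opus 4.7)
The plan is to establish the cyclic chain of implications $\text{(i)} \Rightarrow \text{(iii)} \Rightarrow \text{(ii)} \Rightarrow \text{(i)}$, using switching to handle the first implication and a Rayleigh-quotient argument for the last.

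For $\text{(i)} \Rightarrow \text{(iii)}$, I would exploit the switching machinery of Lemma \ref{l:invariant by switching}. If $G$ is bipartite with bipartition $V = V_1 \cup V_2$, define the switching function $\theta(x) = 1$ on $V_1$ and $\theta(x) = -1$ on $V_2$. Then for every edge $x \sim y$ one has $\theta(x)\theta(y) = -1$, so $(G,+)^\theta = (G,-)$, and consequently $\sigma(G,+) = \sigma(G,-)$. On the other hand, the adjacency matrices satisfy $A_{(G,-)} = -A_{(G,+)}$, so $\Delta_{(G,-)} = 2I - \Delta_{(G,+)}$, giving $\sigma(G,-) = 2 - \sigma(G,+)$. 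Combining these two equalities yields $\sigma(G) = 2 - \sigma(G)$.

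For $\text{(iii)} \Rightarrow \text{(ii)}$, simply note that for an unsigned connected graph the constant function is a nontrivial element of the kernel of $\Delta$, so $0 = \lambda_1(G) \in \sigma(G)$. Symmetry of the spectrum then forces $2 = 2 - 0 \in \sigma(G)$, and since $\sigma(G) \subset [0,2]$ one concludes $\lambda_N(G) = 2$.

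For $\text{(ii)} \Rightarrow \text{(i)}$, the key step is to take a nonzero eigenfunction $f$ with $\Delta f = 2f$ and rewrite the identity $\langle (2I - \Delta)f, f\rangle = 0$ in a suggestive form. Using the standard computation
\[
\langle \Delta f, f \rangle = \frac{1}{2} \sum_{(x,y) \in E} \mu_{xy}(f(x)-f(y))^2, \qquad 2\langle f, f\rangle = \sum_{(x,y)\in E}\mu_{xy}(f(x)^2+f(y)^2),
\]
the equation $\langle \Delta f, f\rangle = 2 \langle f, f\rangle$ rearranges to
\[
\sum_{(x,y)\in E}\mu_{xy}\bigl(f(x)+f(y)\bigr)^2 = 0,
\]
so $f(y) = -f(x)$ whenever $x \sim y$. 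Connectedness forces $f$ to be nowhere zero (otherwise the zero set would propagate to all of $V$), and then $V_1 = \{f > 0\}$, $V_2 = \{f < 0\}$ gives a bipartition of $V$ with all edges between the two parts, proving bipartiteness.

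The implications $\text{(i)} \Rightarrow \text{(iii)}$ and $\text{(iii)} \Rightarrow \text{(ii)}$ are immediate from the machinery already developed, so the main piece of genuine work is the algebraic identification $\sum \mu_{xy}(f(x)+f(y))^2 = 0$ in the last step, together with the connectedness argument ensuring that $f$ has no zeros; both are standard but constitute the only non-formal part of the proof.
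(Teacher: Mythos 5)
Your proof is correct and complete. Note, however, that the paper itself offers no proof of this lemma: it is stated as a well-known fact with a citation to Chung's \emph{Spectral Graph Theory}, so there is nothing internal to compare against. What you have written is a clean self-contained argument, and your implication (i) $\Rightarrow$ (iii) is exactly the specialization to unsigned graphs of the paper's own later machinery (the switching invariance of Lemma \ref{l:invariant by switching} together with $\sigma(-\Gamma)=2-\sigma(\Gamma)$, i.e.\ Proposition \ref{p:bipartite} combined with Theorem \ref{t:symmetry of the spectrum}), which makes it a natural fit here. The two substantive steps you flag --- the identity $\langle \Delta f,f\rangle - 2\langle f,f\rangle = -\tfrac12\sum_{(x,y)\in E}\mu_{xy}(f(x)+f(y))^2$ forcing $f(y)=-f(x)$ across every edge, and the connectedness argument showing the zero set of $f$ is empty --- both check out, and the resulting sign partition is indeed a bipartition (both parts nonempty since the graph has no isolated vertices). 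The only cosmetic caveat is to fix the convention that $E$ consists of ordered pairs so each edge is counted twice in your displayed sums; with that understood, all constants are consistent.
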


\noindent Clearly, statement $(iii)$ is equivalent to saying that the
spectrum of the transition matrix $P=D^{-1}A$ is symmetric with respect to 0, i.e.~$\sigma(P)=-\sigma(P).$

In this section, we characterize property $(ii)$ of Lemma \ref{l:spectrum unsigned} for signed graphs. Property $(iii),$ i.e. the symmetry of the spectrum, will be postponed to the next section.

Let $C$
be a cycle, i.e. $C=\{x_i\}_{i=0}^k$ such that $x_0\sim x_1\sim\cdots\sim x_k\sim x_0,$ in a signed graph $\Gamma=(G,\eta)$.
The sign of $C$ is defined as $${\rm sign}(C)=\prod_{e\in C}\eta_e$$ where
the product is taken over all edges $e$ in the cycle.
A signed graph $\Gamma=(G,\eta)$ is called
\emph{balanced} if every cycle in
$\Gamma$  has positive sign.
The following characterization of balanced signed graphs is well
known \cite[Theorem~1]{Harary53}, \cite{Zaslavsky82}, \cite[Corollary~2.4]{Hou05} and \cite[Theorem~1]{LiHonghaiLi09}.

\begin{lemma}
Let $\Gamma=(G,\eta)$ be a signed graph. Then the following
statements are equivalent:
\begin{enumerate}[(a)]
\item $\Gamma$ is balanced.
\item $\Gamma\in \overline{(G,+)}.$
\item There exists a partition of $V,$ $V=V_1\cup V_2$ and $V_1\cap
V_2=\emptyset$, such that every edge connecting $V_1$ and $V_2$ has negative sign and every edge within $V_1$ or $V_2$ has positive sign.
\item $\lambda_1(\Gamma)=0.$
\end{enumerate}
\end{lemma}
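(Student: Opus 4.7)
The plan is to verify the equivalences by proving the cycle (b) $\Rightarrow$ (c) $\Rightarrow$ (b) $\Rightarrow$ (a) $\Rightarrow$ (b) and then (b) $\Leftrightarrow$ (d). The easy steps set up the combinatorics; the main work will be in (a) $\Rightarrow$ (b) and in (d) $\Rightarrow$ (b).

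For (b) $\Leftrightarrow$ (c), I would simply translate between a switching function $\theta: V\to\{1,-1\}$ and the partition $V_1=\theta^{-1}(1)$, $V_2=\theta^{-1}(-1)$: by the definition in \eqref{d:switched}, the condition $\eta^{\theta}_{xy}=\theta(x)\theta(y)=1_E$ holds iff edges inside $V_i$ are positive and crossing edges are negative. For (b) $\Rightarrow$ (a), observe that switching preserves the sign of any cycle $C=\{x_0,\dots,x_k,x_0\}$, since $\prod_{i}\eta^{\theta}_{x_ix_{i+1}}=\prod_i\theta(x_i)^2\prod_i\eta_{x_ix_{i+1}}=\prod_i\eta_{x_ix_{i+1}}$; so if $\Gamma\in\overline{(G,+)}$, every cycle has positive sign.

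The real content is (a) $\Rightarrow$ (b). Assuming $\Gamma$ balanced, I would fix a spanning tree $T$ of $G$ (using connectedness), choose a root $v_0$, and define $\theta(v_0)=1$ and $\theta(v)=\prod_{e\in\pi(v)}\eta_e$, where $\pi(v)$ is the unique $v_0$-to-$v$ path in $T$. Along every tree edge $uv$ one then has $\theta(u)\theta(v)=\eta_{uv}$, so $\eta^{\theta}_{uv}=1$. For a non-tree edge $xy$, the tree paths $\pi(x)$ and $\pi(y)$ together with $xy$ close a cycle whose sign is $+1$ by hypothesis; the shared portion up to the lowest common ancestor cancels (squares), yielding $\theta(x)\theta(y)\eta_{xy}=1$ as well. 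Hence $\Gamma^{\theta}=(G,+)$.

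Finally, for (b) $\Leftrightarrow$ (d), Lemma \ref{l:invariant by switching} gives $\sigma(\Gamma)=\sigma(G,+)$ whenever $\Gamma\in\overline{(G,+)}$, and the unsigned normalized Laplacian has $0$ as its smallest eigenvalue (via the constant eigenfunction), giving (b) $\Rightarrow$ (d). Conversely, the Dirichlet form computation
\begin{equation*}
\langle \Delta f,f\rangle=\tfrac{1}{2}\sum_{x\sim y}\mu_{xy}\bigl(f(x)-\eta_{xy}f(y)\bigr)^{2}
\end{equation*}
shows that any $f$ with $\Delta f=0$ satisfies $f(x)=\eta_{xy}f(y)$ on every edge; connectedness forces $|f|\equiv c>0$, and $\theta:=f/c$ is a $\{1,-1\}$-valued switching function with $\eta_{xy}=\theta(x)\theta(y)$, i.e.\ $\Gamma^{\theta}=(G,+)$. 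The only delicate point I anticipate is keeping track of signs on the tree/non-tree split in the (a) $\Rightarrow$ (b) argument; everything else is formal once the Dirichlet identity and the switching invariance are in hand.
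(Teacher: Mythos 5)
Your proof is correct, but note that the paper does not actually prove this lemma: it records the equivalences as ``well known'' and defers to the literature (Harary's balance theorem, Zaslavsky, Hou, and Li--Li), so there is no in-paper argument to match yours against. Your self-contained route is the standard one and all steps check out: the dictionary between a switching function $\theta$ and the partition $V_1=\theta^{-1}(1)$, $V_2=\theta^{-1}(-1)$ gives (b) $\Leftrightarrow$ (c) directly from \eqref{d:switched}; switching invariance of cycle signs gives (b) $\Rightarrow$ (a); the spanning-tree potential $\theta(v)=\prod_{e\in\pi(v)}\eta_e$ handles tree edges by construction and non-tree edges via the fundamental cycle, where the identity $\theta(x)\theta(y)=\prod_{e\in P(x,y)}\eta_e$ (common prefix squaring to $1$) reduces the claim exactly to positivity of that cycle's sign; and for (d), the Rayleigh-quotient expression for $\lambda_1$ already displayed in the paper shows that a null vector satisfies $f(x)=\eta_{xy}f(y)$ on every edge, whence connectedness forces $|f|$ constant and nonzero and $\theta=f/|f|$ is the required switching. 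The only points worth making explicit are that connectedness (assumed throughout the paper) is needed both for the spanning tree and for the constancy of $|f|$, and that the partition in (c) is permitted to have $V_2=\emptyset$, which is what happens when $\eta=1_E$. Compared with simply citing Harary--Zaslavsky as the authors do, your argument has the advantage of exhibiting the switching function explicitly in both the combinatorial direction (a) $\Rightarrow$ (b) and the spectral direction (d) $\Rightarrow$ (b), which also makes transparent why the first eigenfunction of a balanced graph vanishes nowhere.
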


For a signed graph $\Gamma=(G,\eta)$, the
\emph{reverse signed graph} is defined as $-\Gamma:=(G,-\eta)$.
Clearly $(G,-)=-(G,+).$
A signed graph $\Gamma=(G,\eta)$ is called
\emph{antibalanced} if $-\Gamma$ is balanced.
Furthermore, since $P_{-\Gamma}=-P_{\Gamma},$ we have
\begin{equation}\label{e:minusoperation}
	\sigma(-\Gamma)=2-\sigma(\Gamma).
\end{equation}
Based on these observations, one obtains the following lemma; see also \cite[Theorem~1]{LiHonghaiLi09}.

\begin{lemma}
Let $\Gamma=(G,\eta)$ be a signed graph. Then the following are
equivalent:
\begin{enumerate}[(a)]
\item $\Gamma$ is antibalanced.
\item $\Gamma\in \overline{(G,-)}.$
\item There exists a partition of $V,$ $V=V_1\cup V_2$ ($V_1\cap
V_2=\emptyset$), such that every edge connecting $V_1$ and $V_2$ has
positive sign and each edge within $V_1$ or $V_2$ has negative
sign.

\item $\lambda_N(\Gamma)=2.$
\end{enumerate}
\end{lemma}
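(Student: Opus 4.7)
The plan is to reduce every equivalence to the corresponding statement in the balanced-graph lemma via the involution $\Gamma\mapsto -\Gamma$. The key algebraic inputs are the identity $P_{-\Gamma}=-P_\Gamma$, which gives $\sigma(-\Gamma)=2-\sigma(\Gamma)$ as in \eqref{e:minusoperation}, together with the fact that negation commutes with switching in the sense that $(-\Gamma)^\theta=-(\Gamma^\theta)$.

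For (a) $\Leftrightarrow$ (b): by definition $\Gamma$ is antibalanced iff $-\Gamma$ is balanced, which by the previous lemma is iff $-\Gamma\sim (G,+)$. Using $(-\Gamma)^\theta=-(\Gamma^\theta)$, one checks that $-\Gamma\sim (G,+)$ precisely when $\Gamma\sim-(G,+)=(G,-)$, i.e., $\Gamma\in\overline{(G,-)}$.

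For (b) $\Leftrightarrow$ (c): given the partition in (c), I would set $\theta(x)=1$ on $V_1$ and $\theta(x)=-1$ on $V_2$, so that $\theta(x)\theta(y)=1$ for $x,y$ in the same part and $-1$ otherwise; then a one-line case analysis shows $\eta^\theta\equiv -1$ on $E$, hence $\Gamma^\theta=(G,-)$. Conversely, any $\theta$ witnessing $\Gamma^\theta=(G,-)$ induces such a partition $V_1=\theta^{-1}(1)$, $V_2=\theta^{-1}(-1)$, and reversing the same case analysis recovers the sign pattern in (c).

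For (a) $\Leftrightarrow$ (d): by \eqref{e:minusoperation}, $2\in\sigma(\Gamma)$ iff $0\in\sigma(-\Gamma)$. Since $\sigma(\Gamma)\subset[0,2]$, the condition $\lambda_N(\Gamma)=2$ is equivalent to $2\in\sigma(\Gamma)$, hence to $\lambda_1(-\Gamma)=0$; by the previous lemma applied to $-\Gamma$, this is equivalent to $-\Gamma$ being balanced, i.e., $\Gamma$ antibalanced. None of the steps look delicate; the mildest care is needed in the bookkeeping for (b) $\Leftrightarrow$ (c), which is where an unsigned proof would silently invoke the same partition trick but the signs must now be tracked explicitly.
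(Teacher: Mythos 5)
Your proof is correct and follows exactly the route the paper intends: the paper gives no explicit proof but states that the lemma follows from the observations $(G,-)=-(G,+)$, the definition of antibalancedness via $-\Gamma$, and $\sigma(-\Gamma)=2-\sigma(\Gamma)$, which is precisely the reduction to the balanced-graph lemma that you carry out. The detail you fill in (the sign bookkeeping for (b) $\Leftrightarrow$ (c) and the compatibility $(-\Gamma)^\theta=-(\Gamma^\theta)$) is routine and checks out.
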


\begin{remark} For any weighted graph,
$$\lambda_1(G,\eta)\geq 0=\lambda_1(G,+),$$
$$\lambda_N(G,\eta)\leq 2=\lambda_N(G,-),$$
where the equalities hold only for balanced or antibalanced graphs, respectively.
\end{remark}


In the remainder of this section, we discuss the first eigenvalue and eigenvectors of signed graphs.
As usual, the smallest eigenvalue of a finite signed graph $\Gamma$ is characterized by the Rayleigh quotient
$$\lambda_1(\Gamma)=\inf_{f\neq 0}\frac12\frac{\sum_{x,y\in V}\mu_{xy}(f(x)-\eta_{xy}f(y))^2}{\sum_{x\in V}f^2(x)\mu(x)}.$$
Is there any special property for the first eigenvalue and eigenvector?
It is well known that the first eigenvalue of an unsigned weighted graph
$G$ is simple if the graph is connected. However, this is not
the case for signed graphs:

\begin{example}
Let $(K_N,+)$ be an unsigned complete graph of $N$ vertices and $\Gamma=(K_N,-).$ Then
$$\sigma(\Gamma)=\left\{\frac{N-2}{N-1},\dots,\frac{N-2}{N-1},2\right\},$$
where the multiplicity of the first eigenvalue is
$N-1$.
\end{example}

Note that the multiplicity of the first eigenvalue of a signed graph can be quite large.
The example above concerns antibalanced graphs.
We next give an example of a generic signed graph, neither balanced nor
antibalanced, whose first eigenvalue has multiplicity larger
than 1.

\begin{example}
Let $\Gamma=(C_4,\eta)$ be a cycle graph of order $4$ with edge
signs $\{1,1,1,-1\}$. Clearly, it is neither balanced nor
antibalanced. An explicit calculation shows that
$$\sigma(\Gamma)=\left\{1-\frac{\sqrt2}{2},1-\frac{\sqrt2}{2},1+\frac{\sqrt2}{2},1+\frac{\sqrt2}{2}\right\},$$
where the multiplicity of the first eigenvalue is $2$.
Thus, the first eigenvalue of a signed graph may have high multiplicity.
\end{example}

A well-known fact is that the first eigenfunction of an unsigned graph can be chosen to be positive everywhere (in fact, it is a constant function).
One then has the following natural question:
If $(G,\eta)$ is a signed graph with a positive first
eigenfunction, should this graph be the unsigned weighted graph
$(G,\eta)=(G,+)$?
We provide a negative answer.

\begin{thm}
For any $\overline{\Gamma}\in \overline{\mathcal{G}},$ there exists
a signed graph $\Gamma'\in\overline{\Gamma}$ such that the first
eigenvector of $\Delta_{\Gamma'}$ is nonnegative everywhere.
Moreover, if one of the first eigenvectors of $\Gamma$ vanishes
nowhere, then there exists a signed graph
$\Gamma'\in\overline{\Gamma}$ such that the first eigenvector of
$\Delta_{\Gamma'}$ is strictly positive everywhere.
\end{thm}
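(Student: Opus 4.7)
The strategy is to exploit Lemma~\ref{l:invariant by switching}: switching preserves the spectrum, and transforms eigenfunctions via multiplication by the switching function. So if we start with any first eigenfunction $f$ of $\Delta_{\Gamma}$, we can hope to choose a switching function $\theta$ that "aligns the signs" of $f$, producing a nonnegative (or positive) first eigenfunction of the switched graph $\Gamma^{\theta}$.

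Concretely, let $f$ be a first eigenfunction of $\Delta_{\Gamma}$ corresponding to $\lambda_1(\Gamma)$. Define $\theta: V\to\{1,-1\}$ by
$$\theta(x)=\begin{cases}\mathrm{sign}(f(x)), & f(x)\neq 0,\\ 1, & f(x)=0.\end{cases}$$
Set $\Gamma'=\Gamma^{\theta}\in\overline{\Gamma}$. By Lemma~\ref{l:invariant by switching}, the function $f^{\theta}(x)=\theta(x)f(x)$ is an eigenfunction of $\Delta_{\Gamma'}$ corresponding to the same eigenvalue $\lambda_1(\Gamma)=\lambda_1(\Gamma')$. But by construction $f^{\theta}(x)=|f(x)|\geq 0$ for all $x\in V$, so $f^{\theta}$ is a nonnegative first eigenfunction of $\Delta_{\Gamma'}$. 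This proves the first assertion.

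For the second assertion, suppose some first eigenfunction $f$ of $\Delta_{\Gamma}$ vanishes nowhere. Then the rule $\theta(x)=\mathrm{sign}(f(x))$ is unambiguously defined and takes values in $\{1,-1\}$. The same argument now gives $f^{\theta}(x)=|f(x)|>0$ for every $x\in V$, so $\Gamma'=\Gamma^{\theta}$ has a strictly positive first eigenfunction.

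The proof is essentially a one-line corollary of the switching lemma; there is no genuine obstacle. The only minor point to be careful about is the convention on vertices where $f$ vanishes: any choice of $\theta(x)\in\{1,-1\}$ works there, since the product $\theta(x)f(x)$ is $0$ regardless, and this choice does not affect whether $f^{\theta}\geq 0$. Note also that both assertions are best possible in the natural sense: without the nonvanishing hypothesis one cannot in general upgrade nonnegative to strictly positive, because a first eigenfunction of a signed graph may have genuine zeros (as the examples of high-multiplicity first eigenvalues preceding the theorem suggest).
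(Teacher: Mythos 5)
Your proof is correct and is essentially identical to the paper's: the paper also defines $\theta(x)=-1$ where $f(x)<0$ and $\theta(x)=1$ otherwise (which coincides with your sign convention), and invokes Lemma~\ref{l:invariant by switching} to conclude that $f^{\theta}=|f|$ is a first eigenvector of $\Delta_{\Gamma^{\theta}}$. No differences of substance.
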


\begin{proof}
Let $f$ be a first eigenvector of $\Delta(\Gamma).$ We define a
switching function $\theta:V\to\R$ by
\[\theta(x)=\left\{\begin{array}{rl}-1, & f(x)<0,\\
1, & \rm{otherwise}.
\end{array}
\right.\]
Clearly, $f^{\theta}:=\theta\cdot f$ is nonnegative everywhere.
Then by Lemma \ref{l:invariant by switching}, $f^{\theta}$ is the
first eigenvector to $\Delta_{\Gamma^{\theta}}$.
The signed graph $\Gamma ' = \Gamma^{\theta}$ satisfies the assertions of the theorem.
\end{proof}

\section{Symmetry of the spectrum}\label{s:symmetry}
In this section, we study the symmetry of the spectra of signed graphs. Recall that for unsigned graphs bipartiteness is the only reason for the symmetry of the spectra. However, for signed graphs some new phenomena emerge.


A signed graph is called
\emph{bipartite} if its underlying graph is bipartite, i.e. there is a partition of $V,$ $V=V_1\cup V_2$, such that any edge in $E$ connects a vertex in $V_1$ to a vertex in $V_2.$ One
notices that the sign function plays no role in the definition of
bipartiteness. By the same techniques as in the unsigned case, one can
prove that the spectrum of a signed graph is symmetric if it is bipartite; see \cite[Lemma~4]{AtayTuncel}.

\begin{lemma}\label{l:bipartite spectrum}
If $\Gamma=(G,\eta)$ is a bipartite signed graph, then the spectrum of $\Gamma$ is symmetric.
\end{lemma}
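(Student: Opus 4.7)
The plan is to combine two pieces of structure already set up in the excerpt: the invariance of the spectrum under switching (Lemma \ref{l:invariant by switching}) and the identity $\sigma(-\Gamma)=2-\sigma(\Gamma)$ from \eqref{e:minusoperation}. The bipartite structure lets me bridge these two by exhibiting a single switching function that transforms $\Gamma$ into $-\Gamma$.

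Concretely, let $V=V_1\cup V_2$ be a bipartition of $G$. I would define the switching function $\theta:V\to\{1,-1\}$ by $\theta(x)=1$ for $x\in V_1$ and $\theta(x)=-1$ for $x\in V_2$. Since every edge in a bipartite graph joins a vertex of $V_1$ to one of $V_2$, we have $\theta(x)\theta(y)=-1$ whenever $x\sim y$. Therefore
\[
\eta^{\theta}(xy)=\theta(x)\eta_{xy}\theta(y)=-\eta_{xy},\qquad\forall\,x\sim y,
\]
so by the definition \eqref{d:switched} one gets $\Gamma^{\theta}=-\Gamma$.

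With this identification the proof is immediate. Lemma \ref{l:invariant by switching} yields $\sigma(\Gamma)=\sigma(\Gamma^{\theta})=\sigma(-\Gamma)$, while \eqref{e:minusoperation} gives $\sigma(-\Gamma)=2-\sigma(\Gamma)$; chaining these produces $\sigma(\Gamma)=2-\sigma(\Gamma)$, which is exactly the desired symmetry about $1$. There is essentially no obstacle here: the only ingredient is the bipartition, which precisely guarantees that the two-valued function $\theta$ defined from it takes opposite signs on the endpoints of every edge, and hence negates every signed weight under the switching conjugation. (For readers who prefer a direct verification, one can equivalently check by hand that if $\Delta f=\lambda f$, then $g(x):=\theta(x)f(x)$ satisfies $\Delta g=(2-\lambda)g$: a computation using $\theta(y)=-\theta(x)$ for $y\sim x$ converts the $-\eta_{xy}\theta(y)f(y)=\theta(x)\eta_{xy}f(y)$ contribution into a sign-flipped copy of $\Delta f$, producing the eigenvalue $2-\lambda$.)
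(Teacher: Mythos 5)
Your proof is correct and follows essentially the route the paper itself indicates: the paper defers the lemma's proof to a citation, but its Proposition \ref{p:bipartite} constructs exactly your switching function $\theta=1_{V_1}-1_{V_2}$ to get $\Gamma^{\theta}=-\Gamma$, and the remark after Theorem \ref{t:symmetry of the spectrum} notes that the lemma is the special case of that theorem obtained by combining Lemma \ref{l:invariant by switching} with \eqref{e:minusoperation}, which is precisely your chain $\sigma(\Gamma)=\sigma(\Gamma^{\theta})=\sigma(-\Gamma)=2-\sigma(\Gamma)$. Your parenthetical direct eigenfunction computation is also correct and matches the ``same techniques as in the unsigned case'' the paper alludes to.
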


The next proposition gives a characterization of bipartite signed
graphs.

\begin{pro}\label{p:bipartite}
A signed graph $\Gamma=(G,\eta)$ is bipartite if and only if
$\overline{\Gamma}=\overline{-\Gamma}.$
\end{pro}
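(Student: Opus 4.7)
The plan is to unpack both sides of the equivalence $\overline{\Gamma}=\overline{-\Gamma}$ and $G$ bipartite in terms of the existence of a switching function with a specific sign-flipping property, and then identify this property with the existence of a proper $2$-coloring of the vertex set of $G$.

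First, I would observe that by definition of the equivalence relation, $\overline{\Gamma}=\overline{-\Gamma}$ is the same as $\Gamma\sim -\Gamma$, i.e., there exists a switching function $\theta:V\to\{1,-1\}$ with $\Gamma^{\theta}=-\Gamma$. Writing this out using the formula $\eta^{\theta}(xy)=\theta(x)\eta_{xy}\theta(y)$, the requirement becomes
\[
\theta(x)\eta_{xy}\theta(y)=-\eta_{xy}\quad \text{for every } x\sim y,
\]
and since $\eta_{xy}\in\{1,-1\}$ is nonzero, this is equivalent to
\[
\theta(x)\theta(y)=-1\quad \text{for every } x\sim y.
\]
Thus $\overline{\Gamma}=\overline{-\Gamma}$ if and only if there exists $\theta:V\to\{1,-1\}$ which takes opposite values on the endpoints of every edge.

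Next I would interpret this condition graph-theoretically. Setting $V_1=\theta^{-1}(1)$ and $V_2=\theta^{-1}(-1)$, the condition $\theta(x)\theta(y)=-1$ for every edge says exactly that every edge of $G$ has one endpoint in $V_1$ and one in $V_2$, i.e., $V=V_1\cup V_2$ is a bipartition of $G$. Conversely, any bipartition $V=V_1\cup V_2$ of the (underlying graph of the) signed graph yields a $\theta$ with $\theta|_{V_1}=1$ and $\theta|_{V_2}=-1$ satisfying the above. By the standard characterization of bipartite graphs via proper $2$-colorings, such a partition exists precisely when $G$ is bipartite.

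Combining the two equivalences gives the proposition. The argument is essentially a direct unpacking of definitions, so there is no substantive obstacle; the only point one has to be careful about is that the sign function $\eta$ itself drops out of the relation $\theta(x)\eta_{xy}\theta(y)=-\eta_{xy}$, so the condition depends only on the underlying graph $G$ and not on $\eta$, which is exactly why bipartiteness of $\Gamma$ (defined via $G$) is the right characterization.
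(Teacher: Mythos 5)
Your proof is correct and follows essentially the same route as the paper's: both directions identify the switching function $\theta$ witnessing $\Gamma^{\theta}=-\Gamma$ with the indicator of a bipartition via $V_1=\theta^{-1}(1)$, $V_2=\theta^{-1}(-1)$. Your version is slightly more explicit in cancelling $\eta_{xy}$ to reduce the condition to $\theta(x)\theta(y)=-1$ on every edge, but the argument is the same.
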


\begin{proof}
$\Longrightarrow$: Let $\Gamma=(G,\eta)$ be a bipartite signed graph
with bipartition $V_1,V_2$,
i.e.~$V=V_1\cup V_2,$
$V_1\cap V_2=\emptyset,$ $V_1,V_2\neq\emptyset$ and there is no edge in the induced subgraphs $V_i,$ $i=1,2$. Set
 \[
\theta(x)=\left\{\begin{array}{rl}1, & \text{if }x\in V_1,\\
-1, & \text{if }x\in V_2.\end{array} \right.
\]
Then we have $\Gamma^{\theta}=-\Gamma.$

$\Longleftarrow$: By $\overline{\Gamma}=\overline{-\Gamma},$ there
exists a switching function $\theta:V\to\{1,-1\}$ such that
\begin{equation}\label{e:eq1}\Gamma^{\theta}=-\Gamma.\end{equation}Let $V_1=\{x\in V \, | \, \theta(x)=1\}$ and $V_2=\{x\in V \, | \, \theta(x)=-1\}.$
Then there are no edges within the induced subgraphs $V_1$ and $V_2$.
Indeed, if there were an edge in the subgraph $V_1$ or $V_2$, this would contradict \eqref{e:eq1}. Hence we obtain a bipartition, $V_1\cup V_2$, of $G.$
\end{proof}

Recall that for unsigned weighted graphs the symmetry of the spectrum is completely equivalent to the bipartiteness
of the graph (see  in Lemma \ref{l:spectrum unsigned}$(iii)$).
A main difference in signed graphs is
that there are more structural conditions which may create symmetric spectra.
In the following, we present a general machinery to produce symmetric spectrum for signed
graphs that has no counterpart in the unsigned case.

As defined in the introduction, two signed graphs $(G,\eta),(G',\eta')$ are called isomorphic if they have same combinatorial, weighted, and signed graph structure. Hence
$$\sigma(G,\eta)=\sigma(G',\eta').$$
Now we are ready to prove one of the main results of this paper.


\begin{thm}\label{t:symmetry of the spectrum}
Let $\Gamma=(G,\eta)$ be a signed weighted graph. If there is a
switching function $\theta: V\to \{1,-1\}$ such that
$\Gamma^{\theta}\simeq -\Gamma$, then the spectrum of $\Gamma$ is symmetric.
\end{thm}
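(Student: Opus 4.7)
The plan is to chain together three facts already established in the excerpt: switching invariance, isomorphism invariance, and the behavior of the spectrum under sign reversal. The hypothesis $\Gamma^\theta \simeq -\Gamma$ is exactly what connects these three in a loop.

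First I would invoke Lemma \ref{l:invariant by switching}, which says that the switched graph $\Gamma^\theta$ has the same spectrum as $\Gamma$, so $\sigma(\Gamma^\theta)=\sigma(\Gamma)$. Next, isomorphic signed weighted graphs clearly have the same normalized Laplacian up to relabeling of basis vectors (conjugation by the permutation matrix induced by $S$), hence $\sigma(\Gamma^\theta) = \sigma(-\Gamma)$ from the assumption $\Gamma^\theta \simeq -\Gamma$. Finally, by identity \eqref{e:minusoperation}, which followed from $P_{-\Gamma}=-P_{\Gamma}$, we have $\sigma(-\Gamma)=2-\sigma(\Gamma)$.

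Combining these three equalities in sequence gives
\[
\sigma(\Gamma)=\sigma(\Gamma^\theta)=\sigma(-\Gamma)=2-\sigma(\Gamma),
\]
which is exactly the symmetry of the spectrum with respect to $1$.

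There is no real obstacle here: the theorem is essentially a bookkeeping consequence of the three lemmas/observations already assembled earlier in Section \ref{s:basic properties signed graphs}. The only point that might deserve an explicit line of justification is that isomorphic signed weighted graphs have equal spectra (just note that if $S:V\to V'$ realizes the isomorphism, then $f\mapsto f\circ S^{-1}$ is an isometry $\ell^2(V,m)\to\ell^2(V',m')$ intertwining $\Delta_\Gamma$ and $\Delta_{\Gamma'}$, so in particular they are unitarily equivalent and have the same spectrum). Everything else is a one-line substitution.
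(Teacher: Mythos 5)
Your proposal is correct and is essentially identical to the paper's proof: the authors likewise chain $\sigma(\Gamma)=\sigma(\Gamma^{\theta})=\sigma(-\Gamma)=2-\sigma(\Gamma)$ by combining Lemma \ref{l:invariant by switching}, the isomorphism invariance of the spectrum, and identity \eqref{e:minusoperation}. Your extra remark justifying isomorphism invariance via the intertwining isometry is a fine (and slightly more explicit) version of what the paper asserts just before the theorem statement.
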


\begin{proof}[Proof of Theorem \ref{t:symmetry of the spectrum}]
Combining Lemma \ref{l:invariant by switching},
\eqref{e:minusoperation} and the invariance of the spectrum under
the isomorphism, we have
$$\sigma(\Gamma)=\sigma(\Gamma^{\theta})=\sigma(-\Gamma)=2-\sigma(\Gamma).$$
This proves the theorem.
\end{proof}

Note that for bipartite signed graphs  $\overline{\Gamma}=\overline{-\Gamma}$ by Proposition \ref{p:bipartite}. This
indicates that Lemma \ref{l:bipartite spectrum} is a special case of
Theorem \ref{t:symmetry of the spectrum}. In the following, inspired by Theorem \ref{t:symmetry of the spectrum}, we provide an example of a signed graph with symmetric spectrum, although it is non-bipartite.

\begin{example}\label{ex:symmetric spectrum}
Let $\Gamma$ be the signed graph shown in Figure 1, with $+1$ and $-1$ edge weights as indicated.
Consider the switching function $\theta$ given by
\[\theta(i)=\left\{\begin{array}{rl}-1, & i=2,4,\\
1, & \rm{otherwise}. \end{array}
\right.\]
Now the permutation $T=(1,3)\in S_5$, which is an isomorphism of
signed graphs, transforms $\Gamma^{\theta}$ to $-\Gamma.$ By Theorem
\ref{t:symmetry of the spectrum}, the spectrum of $\Gamma$ is
symmetric with respect to $1$.
Indeed, an explicit calculation gives $$\sigma(\Gamma)=\left\{1\pm \frac{\sqrt{4-\sqrt3}}{3},1\pm
\frac{\sqrt{4+\sqrt3}}{3},1\right\}.$$
Thus, $\Gamma$ is a nontrivial (i.e.~non-bipartite) example for the symmetry of the spectrum.
\end{example}

\begin{figure}
\centering
\includegraphics[width=0.8\textwidth]{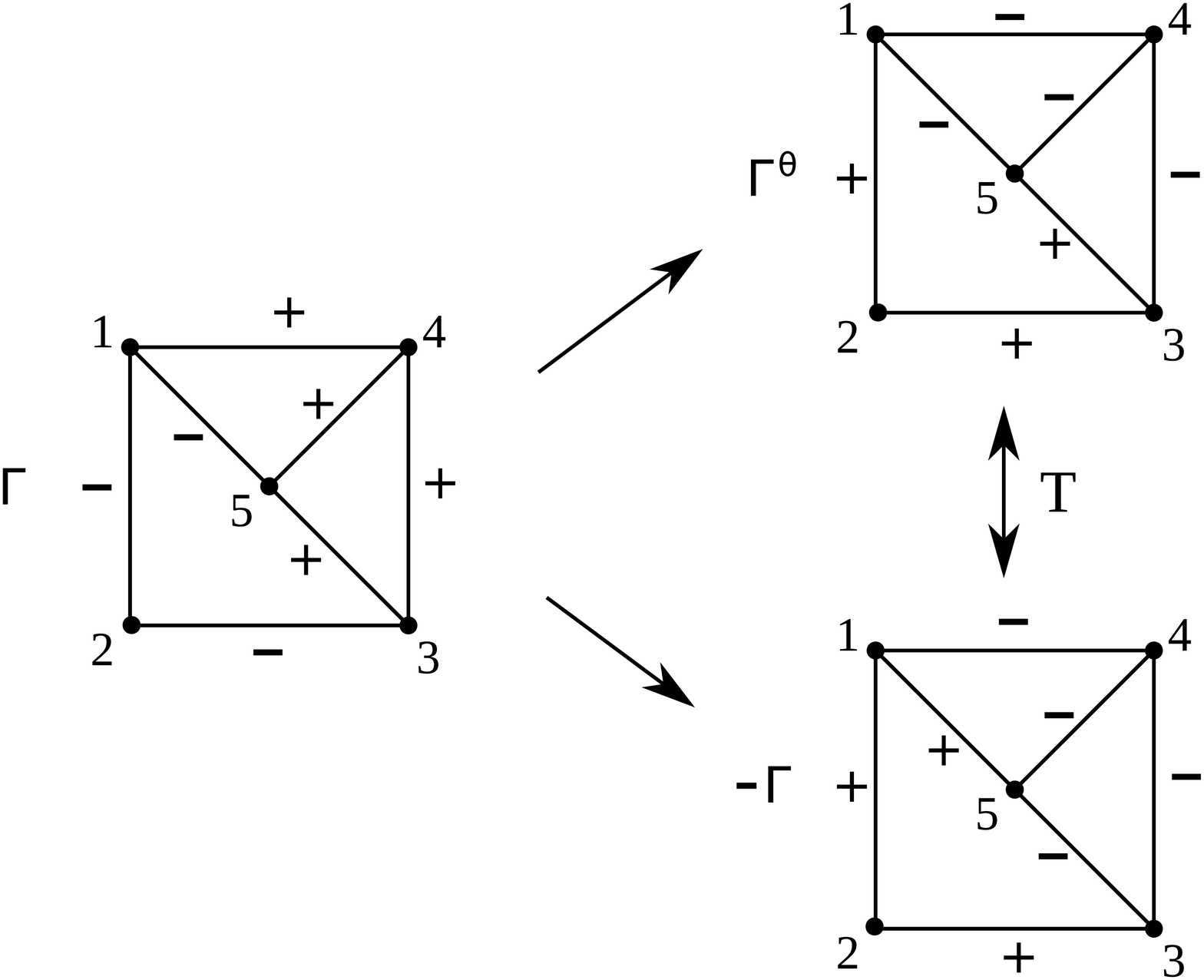}
\caption{A non-bipartite signed graph with symmetric spectrum.
The eigenvalues are $1\pm \frac{\sqrt{4-\sqrt3}}{3},1\pm
\frac{\sqrt{4+\sqrt3}}{3}$, and 1.} \label{fig1}
\end{figure}







\section{Damped two-periodic solutions}
\label{s:Periodic solutions}
In this section, we connect the symmetry of the spectrum of signed graphs to the existence of period-two oscillatory solutions for the discrete-time heat equation on the graph.

Let
$(G,\eta)$ be a finite signed graph. We say that a function
$f:(\N\cup{0})\times V\to \R$ satisfies the discrete-time heat
equation on $(G,\eta)$ if
\begin{equation*}
\left\{\begin{array}{ll}f(n+1,x)-f(n,x)=-\Delta f(n,x),
\quad n\in \mathbb{N}\cup \{0\},& \\
f(0,x)=g(x). &
\end{array}
\right.
\end{equation*}
For simplicity, we denote the function $f_n:V\to\R,$
$n\geq 0$, by $f_n(x):=f(n,x)$. Then the
heat equation can be written as
\begin{equation} \label{heateq}
\left\{\begin{array}{ll}f_{n+1}=P f_n, & \\
f_0=g, &
\end{array}
\right.
\end{equation}
where $P=D^{-1}A$ is the generalized transition matrix.
As usual, the notation $\|\cdot\|$ and $\langle \cdot,\cdot\rangle$ will denote the $\ell^2$ norm and inner product, respectively,  of the Hilbert space $\ell^2(V,m).$

\begin{definition}\label{d:periodic solution}
A function $u\neq 0$ is called a generic damped periodic solution of order
two to the discrete-time heat equation \eqref{heateq}, or a \emph{damped $2$-periodic solution} for short, if $u$ is an eigenfunction of $P^2$ but not of $P$; that is, if
there exists a constant $\lambda\geq0$ such that $P^2u=\lambda^2 u$
and $Pu\neq \pm\frac{\|Pu\|}{\|u\|}u.$ We call $\lambda$ the \emph{decay rate}.
\end{definition}

\begin{pro} Let $u$ be a damped $2$-periodic solution of \eqref{heateq} with decay rate $\lambda$. Then the following hold:
  \begin{enumerate}[(a)]
  \item $ku$ is a damped $2$-periodic solution for any $k\neq0.$
  \item $\lambda=\sqrt{\frac{\|P^2u\|}{\|u\|}}=\frac{\|Pu\|}{\|u\|}$.
  \item $0<\lambda\le 1.$
  \end{enumerate}
\end{pro}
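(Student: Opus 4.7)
The three parts are largely direct consequences of the definition combined with the self-adjointness of $P$ on $\ell^2(V,m)$ and the norm bound $\|P\|\le 1$ already noted in Section~\ref{s:basic properties signed graphs}. Before writing any proof, I would unwind the condition ``$Pu\neq\pm\frac{\|Pu\|}{\|u\|}u$'' into the cleaner equivalent statement that $u$ is not an eigenfunction of $P$: for if $Pu=\mu u$, then taking norms forces $\mu=\pm\|Pu\|/\|u\|$, and conversely the two excluded vectors are precisely the candidate eigenfunctions compatible with $P^2u=\lambda^2u$. This reformulation is what makes each of (a)--(c) essentially mechanical.

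For (a), I would simply observe that $P^2(ku)=k\,P^2u=\lambda^2(ku)$, so $ku$ is still an eigenfunction of $P^2$ with the same $\lambda^2$, and that $ku$ is an eigenfunction of $P$ iff $u$ is, since eigenfunctions are defined up to a nonzero scalar. For (b), the identity $\lambda^2=\|P^2u\|/\|u\|$ follows from taking norms of $P^2u=\lambda^2u$, giving the first equality (using $\lambda\ge 0$). For the second equality I would use self-adjointness of $P$:
\begin{equation*}
\|Pu\|^2=\langle Pu,Pu\rangle=\langle u,P^2u\rangle=\lambda^2\langle u,u\rangle=\lambda^2\|u\|^2,
\end{equation*}
and then take the positive square root.

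For (c), the upper bound is immediate from $\|P\|\le 1$: indeed $\lambda=\|Pu\|/\|u\|\le\|P\|\le 1$. The lower bound $\lambda>0$ is the one point where I need to use the non-eigenfunction hypothesis on $u$. If $\lambda=0$, then $P^2u=0$, and self-adjointness gives $\|Pu\|^2=\langle u,P^2u\rangle=0$, so $Pu=0=0\cdot u$. But this exhibits $u$ as an eigenfunction of $P$ (with eigenvalue $0$), which contradicts the definition of a damped $2$-periodic solution. Hence $\lambda>0$.

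There is no real obstacle here; the only subtlety is being careful about the characterization of the excluded condition in Definition~\ref{d:periodic solution} as ``$u$ is not an eigenfunction of $P$,'' which is what lets the lower bound in (c) go through. I would present the three items in order, with the eigenfunction reformulation stated once at the start so it can be invoked cleanly in (a) and (c).
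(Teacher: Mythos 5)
Your proof is correct and follows essentially the same route as the paper: part (b) via taking norms and the self-adjointness identity $\|Pu\|^2=\langle P^2u,u\rangle$, and the lower bound in (c) by the same contradiction showing $Pu=0$ would make $u$ an eigenfunction of $P$. The only cosmetic difference is that you derive $\lambda\le 1$ from the operator norm bound $\|P\|\le 1$ while the paper cites $\sigma(P)\subset[-1,1]$; both facts are established in Section~\ref{s:basic properties signed graphs} and are interchangeable here.
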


\begin{proof}
Statement $(a)$ follows by definition.
By $P^2u=\lambda^2u$ and $u\neq 0,$ we have
$\lambda=\sqrt{\frac{\|P^2u\|}{\|u\|}}.$ Moveover, noting that $P$
is a self-adjoint operator with respect to the inner product
on $\ell^2(V,m)$, we obtain
$$\|Pu\|^2=\langle Pu,Pu\rangle=\langle
P^2u,u\rangle=\lambda^2\|u\|^2.$$
This proves statement $(b)$.
For $(c)$, to prove that $\lambda\neq0$, we argue by contradiction.
Suppose $\lambda=0$. Then by statement $(b)$ we have $\|Pu\|=0$, and thus
$Pu=0$, which implies that $u$ is an eigenvector of $P$ pertaining to the eigenvalue $0$.
This, however, contradicts the definition of $2$-periodic solutions. Finally, $\lambda\le 1$ follows from the fact that all eigenvalues of $P$ belong to the interval $[-1,1]$ and $P^2 u = \lambda^2 u$ by definition.
\end{proof}

For a $2$-periodic oscillatory solution $u$ with decay rate $\lambda$,
we set $v=\frac1{\lambda}Pu$. (Notice that $v$ is not parallel to $u$ by definition).
Hence we have the system of equations
\begin{equation}\label{e:system of equations to periodic}\left\{\begin{array}{ll}Pu=\lambda v, & \\
Pv=\lambda u. &
\end{array}
\right.\end{equation}
If we set $u$ as the initial data of the
discrete-time heat equation, then the solution to \eqref{heateq}
reads
\begin{equation}\label{e:2 periodic solution}f_n=P^nu=\left\{\begin{array}{ll}\lambda^n u, & n \text{ even}, \\
\lambda^n v, & n  \text{ odd}.
\end{array}
\right.\end{equation}
Since the vectors $u$ and $v$ are linearly
independent, the solution $f_n$ oscillates between two phases, $u$ and $v,$
with an amplitude that decays exponentially at a rate $\lambda$, since $\lambda \le 1$.
This motivates the meaning of damped $2$-periodic
solution given in Definition \ref{d:periodic solution}.

We study the machinery to produce a $2$-periodic solution.
We will see that the existence of $2$-periodic solutions is
equivalent to certain symmetry of the spectrum of the normalized
signed Laplacian operator.
The next theorem states that all damped $2$-periodic solutions are in fact encoded by the symmetry of the spectrum.

\begin{thm}\label{t:symmetry and periodic} Let $(G,\eta)$ be a signed  graph. Then $u$ is a damped $2$-periodic solution with decay rate $\lambda$ if and only if
 $$u=f+g,$$
where $f$ and $g$ are eigenfunctions corresponding to the eigenvalues $1-\lambda$ and $1+\lambda$, respectively, of the normalized Laplacian $\Delta$, with $\lambda\neq0$.
\end{thm}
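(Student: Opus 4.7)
The plan is to reformulate everything in terms of $P = I - \Delta$. Since $\Delta f = (1-\lambda)f$ iff $Pf = \lambda f$, and $\Delta g = (1+\lambda)g$ iff $Pg = -\lambda g$, the statement becomes: $u$ is a damped $2$-periodic solution with decay rate $\lambda \neq 0$ if and only if $u = f + g$ where $f,g$ are nonzero with $Pf = \lambda f$ and $Pg = -\lambda g$. This is the factorization $P^2 - \lambda^2 I = (P-\lambda I)(P+\lambda I)$ in disguise.

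For the ``if'' direction, I would compute directly $P^2 u = P^2 f + P^2 g = \lambda^2 f + \lambda^2 g = \lambda^2 u$, so the $P^2$-eigenvalue condition is immediate. It remains to rule out that $u$ itself is a $P$-eigenvector. Since $f$ and $g$ are nonzero eigenvectors of $P$ for the distinct eigenvalues $\pm\lambda$, they are linearly independent; if $Pu = \mu u$, then $\lambda f - \lambda g = \mu f + \mu g$ forces $\mu = \lambda$ and $\mu = -\lambda$ simultaneously, contradicting $\lambda \neq 0$. The same linear independence also prevents $u = f+g = 0$.

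For the ``only if'' direction, given $u$ with $P^2 u = \lambda^2 u$, the natural candidates are the projections onto the $(\pm\lambda)$-eigenspaces of $P$:
\[
f := \tfrac{1}{2}\bigl(u + \tfrac{1}{\lambda}Pu\bigr), \qquad g := \tfrac{1}{2}\bigl(u - \tfrac{1}{\lambda}Pu\bigr),
\]
so that $f+g = u$. A one-line application of $P^2 u = \lambda^2 u$ gives $Pf = \lambda f$ and $Pg = -\lambda g$. I then need to verify that both summands are nonzero: if $f = 0$, then $Pu = -\lambda u$, making $u$ itself a $P$-eigenvector and contradicting the definition of a damped $2$-periodic solution; the case $g = 0$ is symmetric.

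There is essentially no obstacle here: the whole argument is routine linear algebra, once one recognizes that ``$P^2 u = \lambda^2 u$'' means precisely that $u$ lies in the direct sum of the $\lambda$- and $(-\lambda)$-eigenspaces of the self-adjoint operator $P$. The only delicate point is matching the nonvanishing clauses on the two sides, namely the condition ``$u$ is not a $P$-eigenvector'' on one side and ``$f, g$ are genuine eigenfunctions'' on the other; both reduce to the observation, used above in each direction, that two $P$-eigenvectors for distinct eigenvalues are linearly independent.
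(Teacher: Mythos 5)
Your proposal is correct and follows essentially the same route as the paper: the same decomposition $f=\tfrac12\bigl(u+\tfrac{1}{\lambda}Pu\bigr)$, $g=\tfrac12\bigl(u-\tfrac{1}{\lambda}Pu\bigr)$ in the forward direction, and the same linear-independence argument (eigenvectors of $P$ for the distinct eigenvalues $\pm\lambda$) to rule out $u$ being a $P$-eigenvector in the converse. Your handling of the nonvanishing of $f$ and $g$ is slightly more explicit than the paper's, but the substance is identical.
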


\begin{proof}
$\Leftarrow:$ By the assumption and the fact that $\Delta=I-P$,
$f$ and $g$ are eigenfunctions pertaining to the eigenvalues $\lambda$
and $-\lambda$, respectively, of $P$. Hence $Pu=\lambda(f-g)$ and
$P^2u=\lambda^2u.$ Since $f,g$ are eigenvectors pertaining to
different eigenvalues, $\langle f,g\rangle=0.$ It is easy to check
that $u=f+g$ and $f-g$ are linearly independent. This yields that
$u$ is not an eigenvector to $P$, and proves the assertion.

$\Rightarrow:$ Setting $v=\frac1{\lambda}Pu,$ we have the system
\eqref{e:system of equations to periodic} for $u$ and $v$.
Define $f:=\frac12(u+v)$ and $g:=\frac{1}{2}(u-v)$.
It is easy to see that $f$ and $g$ are nonzero since
$u$ and $v$ are linearly independent. By definition $u=f+g$.
Direct calculation shows that $Pf=\lambda f$ and $Pg=-\lambda g$.
This completes the proof.
\end{proof}

\begin{remark}
\begin{enumerate}[(a)] \item We have the following corollary of Theorem \ref{t:symmetry and periodic}: If there is a damped $2$-periodic solution with decay rate $\lambda$ on a signed graph $\Gamma$, then both
$1-\lambda$ and $1+\lambda$ belong to the spectrum of $\Gamma$.
\item For signed weighted graphs whose spectrum is symmetric with
respect to 1, one can construct many $2$-periodic
solutions by virtue of Theorem \ref{t:symmetry and periodic}.\end{enumerate}
\end{remark}

Finally, as a consequence of Theorem \ref{t:symmetry and periodic}, we have the following formulation.
Let $\Gamma=(G,\eta)$ be a signed weighted graph.
We set
$$\Lambda(\Gamma):=\{\lambda: \lambda\neq 1, \lambda\in
\sigma(\Gamma), 2-\lambda\in\sigma(\Gamma)\}.$$
We denote by $E_{\lambda}$ the eigenspace pertaining to the eigenvalue $\lambda$ of $\Delta_\Gamma.$
Then the set of damped $2$-periodic solutions, denoted by
$\mathcal{P}$, has the representation
$$\mathcal{P}=\bigcup_{\lambda\in\Lambda(\Gamma)}E_{\lambda}\oplus E_{2-\lambda}\setminus(E_{\lambda}\oplus 0\cup0\oplus E_{2-\lambda}).$$

\section{Motif replication}\label{s:motif}
In this section, we use the normalized Laplace operator with
Dirichlet boundary conditions to study the spectral changes under
motif replication.

The term \emph{motif} refers to a subgraph $\Omega$ of a signed graph $\Gamma$.
By motif replication we refer to the operation of appending
an additional copy of $\Omega$ together with all its connections and corresponding weights, yielding the enlarged graph denoted by
$\Gamma^{\Omega}$.
More precisely, let $\Omega$ be a subgraph on vertices $\{x_1,\dots,x_n\}$ and let
$\Omega'=\{x_1',\dots,x_n'\}$ be an exact replica of $\Omega$.
Then the enlarged graph $\Gamma^\Omega$, obtained by replicating $\Omega$, is defined on the vertex set
$V(\Gamma^{\Omega})=V(\Gamma)\cup \Omega'$ with the signed edge weights
given by
\[
\ka=\begin{cases} \ka(x,y), & x,y\in V(\Gamma), \\
\ka(x_i,x_j), & x_i',x_j'\in \Omega',\\
\ka(x_i,y), & x_i'\in \Omega', y\in V(\Gamma)\setminus \Omega,\\
\end{cases}
\]
using the edge weights $\kappa(x,y)$ from the original graph $\Gamma$
\cite{AtayTuncel}.
In particular, there are no edges between $\Omega$ and $\Omega'$ in the replicated graph $\Gamma^\Omega$.

Let $\Omega$ be a finite subset of $V$ and $\ell^2(\Omega,m)$ be
the space of real-valued functions on $\Omega$ equipped with the
$\ell^2$ inner product. Note that every function
$f\in\ell^2(\Omega,m)$ can be extended to a function
$\tilde{f}\in\ell^2(V,m)$ by setting $\tilde{f}(x)=0$ for all
$x\in V\setminus \Omega$. The Laplace operator with Dirichlet
boundary conditions $\Delta_\Omega:
\ell^2(\Omega,m) \to \ell^2(\Omega,m)$ is defined as
$$\Delta_\Omega f = (\Delta \tilde{f})_{|\Omega}.$$
Thus, for $x\in
\Omega$ the Dirichlet Laplace operator is given pointwise by
\begin{align*}
\Delta_\Omega f(x) = f(x) - \frac{1}{m(x)}\sum_{y\in\Omega: y\sim x} \ka_{xy}f(y) \\
=
\tilde{f}(x) - \frac{1}{m(x)}\sum_{y\sim x} \ka_{xy}\tilde{f}(y).
\end{align*}
A simple calculation shows that $\Delta_\Omega$ is a positive self-adjoint operator. We arrange the eigenvalues of the Dirichlet
Laplace operator $\Delta_\Omega$ in nondecreasing order,
$\lambda_1(\Omega) \leq \lambda_2(\Omega) \leq \dots \leq
\lambda_N(\Omega)$, where $N=|\Omega|$ denotes the cardinality of the set
$\Omega$.


We next prove that all eigenvalues of the Dirichlet Laplacian on
$\Omega$ are preserved in the motif replication.

\begin{thm}\label{t:motif replication} Let $\Omega$ be a motif
in a signed weighted graph $\Gamma$ and $\Gamma^{\Omega}$ be the
new graph obtained after replicating $\Omega$. Then
$\sigma(\Delta_{\Omega})\subset \sigma(\Gamma^{\Omega}).$
\end{thm}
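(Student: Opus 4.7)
The plan is to prove the inclusion by constructing, for every Dirichlet eigenpair $(\lambda,f)$ on $\Omega$, an explicit eigenfunction of the normalized signed Laplacian $\Delta_{\Gamma^{\Omega}}$ on the enlarged graph with the same eigenvalue $\lambda$. The construction is an \emph{antisymmetric extension}: take the eigenfunction $f$ on $\Omega$, its negative $-f$ on the replica $\Omega'$, and zero everywhere else.

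More precisely, given $f\in \ell^{2}(\Omega,m)$ with $\Delta_{\Omega}f=\lambda f$, I would define $F:V(\Gamma^{\Omega})\to\R$ by
\begin{equation*}
F(x)=\begin{cases} f(x), & x\in \Omega,\\ -f(x_{i}), & x=x_{i}'\in\Omega',\\ 0, & x\in V(\Gamma)\setminus\Omega.\end{cases}
\end{equation*}
Since $f\neq 0$, the extension $F$ is non-zero, so it suffices to verify the eigenvalue equation $\Delta_{\Gamma^{\Omega}}F=\lambda F$ pointwise, splitting into three cases according to where $x$ lies.

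\emph{Case} $x\in V(\Gamma)\setminus\Omega$. Here $F(x)=0$, so I need to check the sum of $F$-weighted contributions from neighbours vanishes. By the construction of $\Gamma^{\Omega}$, each neighbour $x_{i}\in\Omega$ of $x$ has a twin $x_{i}'\in\Omega'$, also adjacent to $x$, with identical signed weight $\kappa_{xx_{i}'}=\kappa_{xx_{i}}$. Because $F(x_{i}')=-F(x_{i})$, these pairs cancel; contributions from neighbours outside $\Omega\cup\Omega'$ vanish since $F=0$ there. Thus $\Delta_{\Gamma^{\Omega}}F(x)=0=\lambda F(x)$.

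\emph{Case} $x\in\Omega$. The neighbourhood of $x$ inside $\Gamma^{\Omega}$ coincides with its neighbourhood in $\Gamma$ (no edges are added between $\Omega$ and $\Omega'$), so $m_{\Gamma^{\Omega}}(x)=m_{\Gamma}(x)$. Neighbours of $x$ in $V(\Gamma)\setminus\Omega$ contribute zero since $F$ vanishes there, and the surviving sum over neighbours in $\Omega$ is exactly $\Delta_{\Omega}f(x)=\lambda f(x)$, by the definition of the Dirichlet Laplacian.

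\emph{Case} $x=x_{i}'\in\Omega'$. By the weight prescription, the degree satisfies $m_{\Gamma^{\Omega}}(x_{i}')=m_{\Gamma}(x_{i})$ and the adjacency structure mirrors that of $x_{i}$ in $\Gamma$ with identical signed weights. A direct computation then yields $\Delta_{\Gamma^{\Omega}}F(x_{i}')=-\Delta_{\Omega}f(x_{i})=-\lambda f(x_{i})=\lambda F(x_{i}')$.

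I do not expect any serious obstacle: the proof is essentially bookkeeping, and the only conceptual point is the antisymmetric extension, which is exactly the device that makes the contributions of $\Omega$ and $\Omega'$ to any outer vertex cancel so that $F$ automatically respects the Dirichlet condition at $V(\Gamma)\setminus\Omega$. The small technicality to be careful about is matching degrees and signed edge weights correctly between $\Gamma$ and $\Gamma^{\Omega}$, in particular verifying $m_{\Gamma^{\Omega}}(x_{i}')=m_{\Gamma}(x_{i})$ and the identity $\kappa_{xx_{i}'}=\kappa_{xx_{i}}$ for $x\in V(\Gamma)\setminus\Omega$, both of which are built into the definition of motif replication recalled earlier.
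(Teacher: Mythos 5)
Your proposal is correct and coincides with the paper's own proof: the paper uses exactly the same antisymmetric extension ($f$ on $\Omega$, $-f$ on $\Omega'$, $0$ elsewhere) and leaves the verification as a "direct calculation," which you have simply carried out in detail. The case analysis and the degree/weight bookkeeping you supply are the right (and only) points that need checking.
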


\begin{proof}
Let $f$ be an eigenvector of the Dirichlet Laplacian
$\Delta_{\Omega}$ corresponding to eigenvalue $\lambda$. Then by direct calculation
it can be seen that the following function is an eigenvector of the Laplacian
on $\Gamma^{\Omega}$:
\begin{equation}\label{e:motif mod}f'(x)=\left\{\begin{array}{rl}f(x), & x\in \Omega, \\
-f(x), & x\in \Omega',\\
0, & \rm{otherwise},
\end{array}
\right.\end{equation} where $\Omega'$ is the copy of $\Omega.$ This proves the
theorem.
\end{proof}

\begin{remark}
By Theorem \ref{t:motif replication}, if the motif $\Omega$ supports a damped 2-periodic
solution $u$ with respect to the Dirichlet boundary condition, then $u',$ defined as in \eqref{e:motif mod}, is a damped 2-periodic solution for the replicated graph $\Gamma^{\Omega}.$
In this way, symmetric eigenvalues $\lambda$ and 2-periodic solutions with decay rate $\lambda$
are carried over to the larger graph after motif replication.
\end{remark}



\bigskip
\noindent \textbf{Acknowledgements.} The authors thank the ZiF (Center for Interdisciplinary Research) of Bielefeld University, where part of this research was conducted under the program \emph{Discrete and Continuous Models in the Theory of Networks}.
FMA acknowledges the support of the European Union's 7th Framework Programme under grant \#318723 (MatheMACS).
BH is supported by NSFC, grant no. 11401106.


\end{document}